\def\qed{\hfill {\hbox{${\vcenter{\vbox{               
   \hrule height 0.4pt\hbox{\vrule width 0.4pt height 6pt
   \kern5pt\vrule width 0.4pt}\hrule height 0.4pt}}}$}}}
\def\tr{\triangleright}
\newtheorem{theorem}{Theorem}
\newtheorem{proposition}[theorem]{Proposition}
\theoremstyle{definition}
\newtheorem{definition}{Definition}
\newtheorem{example}{Example}
\newtheorem{remark}{Remark}
\date{}
\title{\Large \textbf{Lie Ideal Enhancements of Counting Invariants}}
\author{
Gillian Roxanne Grindstaff 
 \footnote{Email: \texttt{grg02010@mymail.pomona.edu}}
\and
Sam Nelson\footnote{Email: \texttt{Sam.Nelson@cmc.edu}. Partially supported by Simons Foundation Collaboration Award 316709 }
}
\begin{document}
\maketitle

\begin{abstract}
We define enhancements of the quandle counting invariant for knots and links
with a finite labeling quandle $Q$ embedded in the quandle of units of
a Lie algebra $\mathfrak{a}$ using Lie ideals. We provide examples
demonstrating that the enhancement is stronger than the associated
unenhanced counting invariant and image enhancement invariant.
\end{abstract}

\textsc{Keywords:} Quandles, Enhancements, Knot and Link invariants, Lie Algebras

\textsc{2000 MSC:} 57M27, 57M25

\section{Introduction} \label{I}

In the early 1980s, Joyce \cite{J} and Matveev \cite{M} 
introduced an algebraic structure called \textit{quandles} or 
\textit{distributive groupoids} with connections to knot theory. In particular
the quandle axioms can be understood as the conditions required by the 
Reidemeister moves for labelings of the arcs in a knot diagram by elements
of a quandle to correspond one to one before and after the move.

Associated
to an oriented knot or link $L$ is a \textit{fundamental quandle} $Q(L)$ which 
determines the knot group as well as the peripheral subgroup and hence is a 
complete invariant of knots up to ambient homeomorphism. Given a finite 
quandle $Q$, the set $\mathrm{Hom}(Q(L),Q)$ is a finite set (provided $L$ is
tame) and its cardinality is a computable knot invariant known as the 
\textit{quandle counting invariant}. In \cite{CJKLS} the first 
\textit{enhancement} of the quandle counting invariant, known as the 
\textit{quandle 2-cocycle} invariant, was introduced. An enhancement is a
stronger invariant which determines the counting invariant but distinguishes 
some knots or links which have the same counting invariant value. Since then,
numerous enhancements of the quandle counting invariant and its generalizations
have been explored. Each homomorphism $f\in\mathrm{Hom}(Q(L),Q)$ can be 
identified with a labeling of the arcs in a diagram of $L$ with elements
of $Q$; in particular, enhancements of the counting invariant associated to
a finite quandle $Q$ can be understood as invariants of $Q$-labeled knots.

In this paper we introduce a new enhancement of the quandle counting invariant
defied via Lie ideals in a Lie algebra, analogous to the symplectic quandle
enhancement defined in \cite{NN}. Our construction involves embedding a finite
quandle $Q$ in the quandle of units $Q(\mathfrak{a})$ of the universal 
enveloping algebra $A$ of a Lie algebra $\mathfrak{a}$, then using the Lie 
algebra structure to 
obtain an invariant signature for each quandle coloring of our knot or link 
$K$. Different embeddings of the same finite quandle into Lie algebras in 
general yield different enhancements. We can think of the embedding 
$Q\to Q(\mathfrak{a})$  as a kind of knotting analogous to embedding
copies of $S^1$ into $S^3$; then these invariants are defined via pairs of
``knotted quandles'' 
philosophically similar to the approach to finite type invariants in 
\cite{GVP}, where a knot is compared to another ``knot'' via evaluation of 
a bilinear form. 

The paper is organized as follows. In Section
\ref{QB} we review the basics of quandles and the quandle counting invariant,
as well as some previously studied enhancements. In Section \ref{L} we 
introduce the Lie Ideal Enhanced polynomial invariant or LIE polynomial 
associated to Lie algebra with finite quandle of units. In Section \ref{CE} we 
collect computations and examples of the new invariants, and in Section \ref{Q}
we conclude with some questions for future research.

\section{Quandles}\label{QB}

We begin with a definition (see \cite{FR,J,M}).
\begin{definition}\textup{
A \textit{quandle} is a set $Q$ with binary operations 
$\tr,\tr^{-1}:Q\times Q\to Q$ satisfying for all $x,y,z\in Q$
\begin{itemize}
\item[(i)] $x\tr x=x$,
\item[(ii)] $(x\tr y)\tr^{-1} y=x=(x\tr^{-1} y)\tr y$, and
\item[(iii)] $(x\tr y)\tr z=(x\tr z)\tr (y\tr z)$.
\end{itemize}
If we have only (ii) and (iii), $Q$ is a \textit{rack}. 
}\end{definition} 

\begin{example}\textup{
Let $G$ be a group. Then for each $n\in\mathbb{Z}$, $G$ is a quandle under
$n$-fold conjugation
\[x\tr y = y^{-n}xy^n,\quad x\tr^{-1} y = y^{n}xy^{-n}.\]
We usually denote this quandle structure as $\mathrm{Conj}_n(G)$.
}\end{example}

\begin{example}\textup{
Let $\Lambda=\mathbb{Z}[t^{\pm 1}]$ and let $A$ be any $\Lambda$-module.  
Then, $A$ is a quandle under the operations
\[x\tr y = tx+(1-t)y,\quad x\tr^{-1} y = t^{-1}x+(1-t^{-1})y.\]
Quandles of this type are known as \textit{Alexander quandles}.
}\end{example}

\begin{example}\textup{
Let $\mathbb{F}$ be a field and $V$ an $\mathbb{F}$-vector space with
symplectic form $\langle\cdot,\cdot\rangle:V\times V\to \mathbb{F}$.
Then, $V$ is a quandle under the operations
\[\vec{x}\tr \vec{y} = \vec{x}+\langle\vec{x},\vec{y}\rangle \vec{y},\quad
\vec{x}\tr^{-1} \vec{y} = \vec{x}-\langle\vec{x},\vec{y}\rangle \vec{y}.\]
Quandles of this type are known as \textit{symplectic quandles}.
}\end{example}

More generally, if $X=\{1,2,\dots, n\}$ then we can define a quandle structure
on $X$ by $i\tr j=M_{ij}$ with an $n\times n$ matrix $M_X$ with the properties 
that $M_{ii}=i$, each column is a permutation of $X$, and 
$M_{M_{ij}k}=M_{M_{ik}M_{jk}}$ for $i,j,k\in X$. We call $M_X$ the \textit{quandle 
matrix} of $X$.

\begin{example}\textup{
The Alexander quandle $X=\Lambda/(3,t-2)=\mathbb{Z}_3[t]/(t-2)$ has quandle 
matrix
\[M_X=\left[\begin{array}{ccc}
1 & 3 & 2 \\ 
3 & 2 & 1 \\ 
2 & 1 & 3
\end{array}\right].\]}
\end{example}

\begin{example}\textup{
The \textit{fundamental quandle} of a link $L$ with diagram 
$D$ has a generator for each arc in $D$ with relation $z=x\tr y$ at each 
crossing. Elements of $Q(L)$ are then equivalence classes of quandle
words in these generators modulo the equivalence relation generated by the
quandle axioms and the crossing relations.
\[\includegraphics{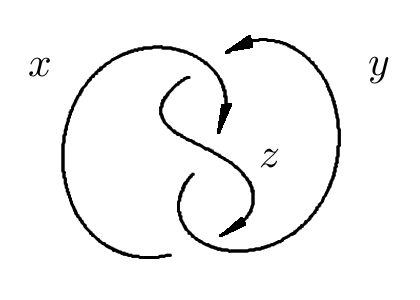} \quad \raisebox{0.5in}{
$Q(L)=\langle x,y,x\ | x\tr y=z, y\tr z=x, z\tr x=y \rangle$.}\]
}\end{example}

\begin{example}\textup{Let $L$ be an oriented link in $S^3$ and $N(L)$ a regular
neighborhood of $L$. Geometrically, the fundamental quandle of $L$ is the set 
of homotopy classes of paths in the link complement $S^3\setminus N(L)$ from a 
base point $\ast\in S^3\setminus N(L)$ to $N(L)$ where the endpoint is allowed 
to wander along $N(L)$ during the homotopy. For each such path $y$ there is 
is a canonical meridian $m(y)$  based at the terminal point of $y$ and
linking the core of the component of $N(L)$ once. The quandle operation 
is then given by the path along $y$, around $m(y)$, backwards along $y$, then
along $x$  
\[\includegraphics{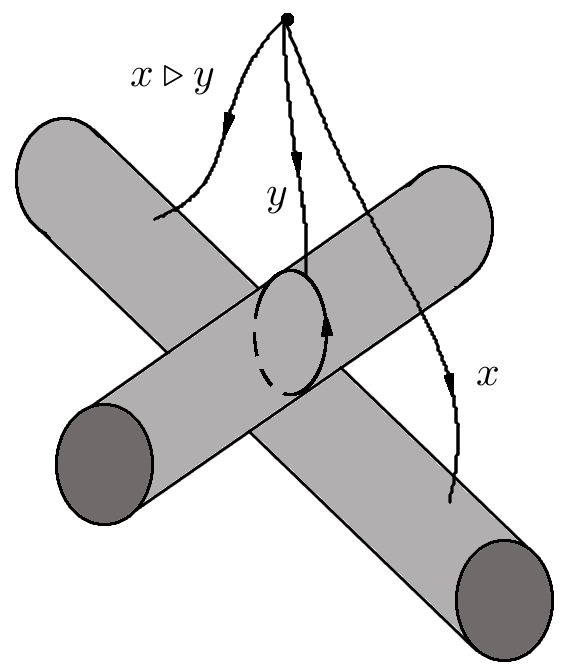} \quad\raisebox{1in}{$x\tr y=xy^{-1}m(y)^{-1}y$}\]
as depicted. See \cite{FR,J,M} for more.
}\end{example}

For each $y\in Q$, define $f_y:Q:\to Q$ by $f_y(x)=x\tr y$. Then quandle axiom
(ii) says that $f_y$ is invertible for each $y$; in particular, the operation
$\tr$ determines the operation $\tr^{-1}$. A subset $S\subset Q$ of a quandle 
$Q$ which is closed under $\tr$ and $\tr^{-1}$ is a \textit{subquandle} of $Q$;
if $Q$ is finite, then closure under $\tr$ implies closure under $\tr^{-1}$. 
A map 
$f:Q\to Q'$ between quandles $Q$ and $Q'$ with operations $\tr$ and $\tr'$
is a \textit{homomorphism of quandles} if for all $x,y\in Q$ we have
\[f(x\tr y)=f(x)\tr' f(y).\]

The quandle axioms can be understood as algebraically encoding the 
Reidemeister moves. More precisely, a \textit{quandle coloring} of an oriented 
knot diagram $K$ by a quandle $Q$ is an assignment of an element of $Q$ to each
arc in $K$ such that at each crossing, the result of an arc labeled $x$ crossing
under and arc labeled $y$ from right to left is an arc labeled $x\tr y$.
\[\includegraphics{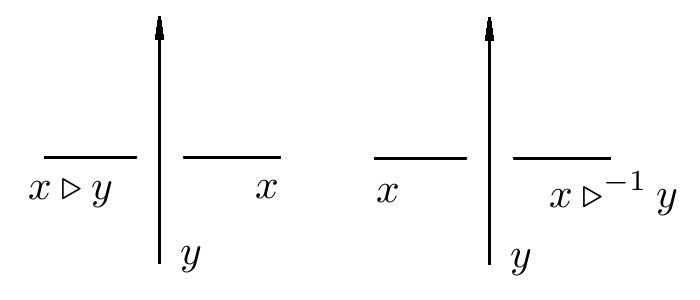}\]
In particular, the quandle axioms are the conditions required to ensure that
for each quandle coloring of a knot diagram before a move, there is a unique
corresponding quandle coloring of the diagram after the move. 
\[\includegraphics{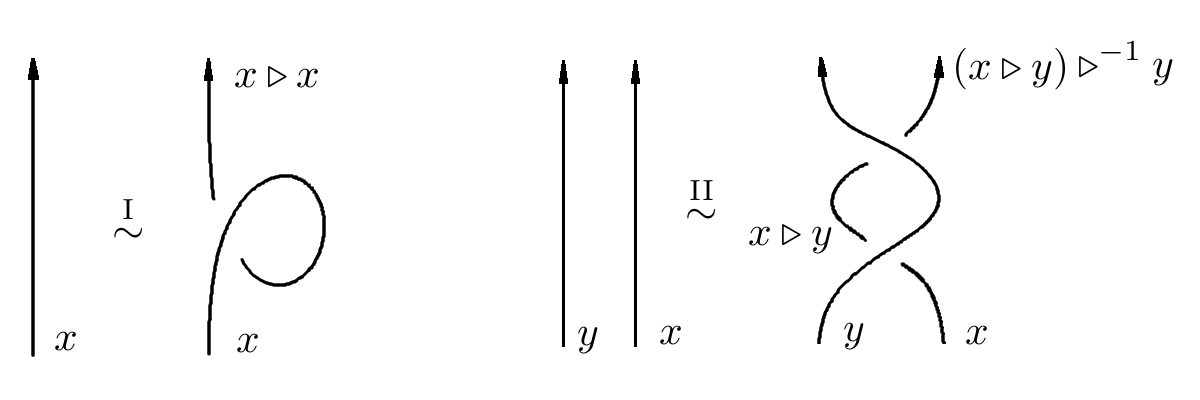}\]
\[\includegraphics{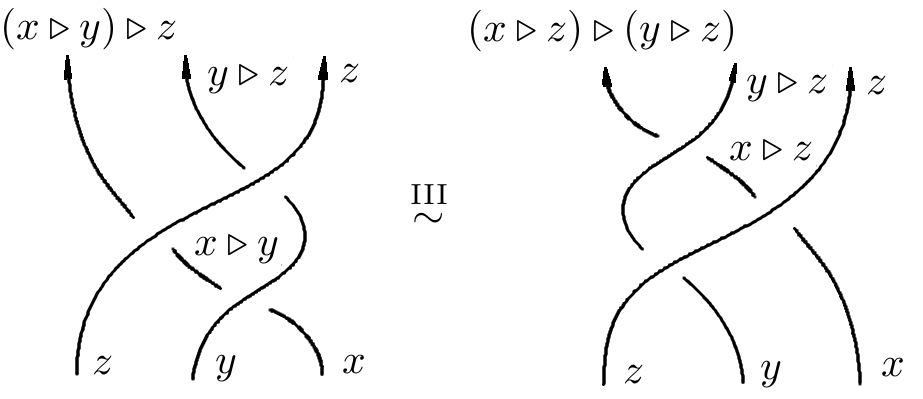}\]
Then we have:

\begin{theorem}
Let $Q$ be a finite quandle and $L$ an oriented knot or link. Then the
number $|\mathrm{Hom}(Q(K),Q)|$ of quandle colorings of a diagram of $L$ is
invariant under Reidemeister moves. 
\end{theorem}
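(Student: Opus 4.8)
The plan is to show that for any Reidemeister move carrying a diagram $D$ of $L$ to a diagram $D'$, there is a bijection between the set of quandle colorings of $D$ by $Q$ and the set of quandle colorings of $D'$ by $Q$. Since any two diagrams of $L$ are connected by a finite sequence of Reidemeister moves together with planar isotopies (which obviously carry colorings to colorings bijectively), this yields invariance of the number of colorings of a chosen diagram, which is the cardinality $|\mathrm{Hom}(Q(L),Q)|$ since a coloring of a diagram is the same data as a quandle homomorphism out of the fundamental quandle presented by that diagram.

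First I would set up the bijection in the standard local-to-global fashion. Each Reidemeister move alters a diagram only inside a small disk $B$; outside $B$ the diagrams $D$ and $D'$ agree, and the arcs of $D$ meeting $\partial B$ correspond to those of $D'$ meeting $\partial B$. Given a coloring of $D$, restrict it to the portion of $D$ lying outside $B$; the claim is that this partial coloring extends to a unique coloring of $D'$, and symmetrically. The bijection is then "retain the colors outside $B$ and fill in the forced colors inside $B$."

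Next I would verify this extension-and-uniqueness claim for each of the three move types, which is exactly where the three quandle axioms are used (the local pictures drawn above are the content). For a type I move the kinked arc forces an internal arc colored $x\tr x$, and axiom (i) gives $x\tr x=x$, matching the straight arc. For a type II move, pushing one strand across the other introduces an arc colored $x\tr y$ and then requires $(x\tr y)\tr^{-1}y$ on the far side; axiom (ii) (equivalently, bijectivity of $f_y$ noted after the definition) makes this equal $x$, so the boundary colors determine a unique consistent interior. For a type III move, chasing labels through the triple point on each side produces the two sides of axiom (iii), namely $(x\tr y)\tr z=(x\tr z)\tr(y\tr z)$, so the colorings on the two sides of the move agree on $\partial B$ and each extends uniquely. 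In each case one must also treat the variants arising from the possible orientations of the strands (and, for a type I move, the sign of the curl); these are handled identically, using $\tr^{-1}$ wherever an understrand runs the opposite way, which is legitimate because axiom (ii) shows $\tr^{-1}$ is determined by $\tr$.

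The only real obstacle is bookkeeping: making sure one has covered all orientation and curl-sign variants of each move, and confirming that the constructed assignment is genuinely a bijection, i.e. that the two extension maps $D\leftrightarrow D'$ are mutually inverse. The latter is immediate once uniqueness is established, since each map fixes the colors outside $B$ and the interior colors are uniquely determined on both sides. Composing the per-move bijections along a sequence of moves joining any two diagrams of $L$ then gives the theorem.
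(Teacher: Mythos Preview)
Your proposal is correct and follows essentially the same approach as the paper: the paper's argument (given in the discussion and figures immediately preceding the theorem rather than in a formal proof environment) is precisely that the three quandle axioms guarantee a unique corresponding coloring on either side of each Reidemeister move, which is exactly the local extension-and-uniqueness bijection you spell out. Your write-up is more explicit about the bookkeeping (orientation variants, mutual inversity of the extension maps, composition along a sequence of moves), but the underlying idea is identical.
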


A quandle coloring of a knot or link $L$ by a quandle $Q$ determines a unique 
homomorphism of quandles $f:Q(L)\to Q$ by assigning an image in $Q$ to each 
generator of the fundamental quandle $Q(L)$; then the homomorphism condition 
requires that $f(x\tr y)=f(x)\tr f(y)$ for each $x,y\in Q(L)$. In particular,
an assignment of elements of $Q$ to arcs in a diagram of $L$ determines a
quandle homomorphism $f:Q(L)\to Q$ if and only if the crossing relations
are satisfied at every crossing, and the homomorphism so determined is unique.
The number of quandle colorings $|\mathrm{Hom}(Q(L),Q)|$ of a knot or link
$L$ is called the \textit{quandle counting invariant} of $K$ with coloring
quandle $Q$. If $Q$ is finite, then $|\mathrm{Hom}(Q(L),Q)|$
is a positive integer greater than or equal to $|Q|$, since monochromatic 
colorings are always valid. 

Now let $\phi$ be a an invariant of $Q$-colored knot diagrams. Instead of 
counting ``1'' for each element of $\mathrm{Hom}(Q(L),Q)$, we can collect
$\phi(f)$ values for each $f\in\mathrm{Hom}(Q(L),Q)$ to obtain a multiset
\[\Phi^{\phi}_Q(K)=\{\phi(f)\ |\ f\in\mathrm{Hom}(Q(L),Q) \}\] whose cardinality is the quandle counting invariant. We
call such an invariant an \textit{enhancement} of the counting invariant.
Especially for integer-valued enhancements, we often replace the multiset 
with its generating function to get a polynomial invariant for ease of 
comparison. For instance, the multiset $\{0,0,1,2,2,2\}$ corresponds to the
polynomial $2+u+3u^2$.

\begin{example}\textup{
Let $Q$ be a finite quandle and $L$ a tame oriented knot or link. For each
$f\in\mathrm{Hom}(Q(L),Q)$, let $\phi(f)$ be the cardinality of the image 
subquandle $\phi(f)=|\mathrm{Im}(f)|$. Then the \textit{image enhancement}
is the multiset 
\[\Phi^{\mathrm{Im},M}_Q(L)=\{|\mathrm{Im}(f)|\ :\ f\in\mathrm{Hom}(Q(L),Q)\}\]
or the polynomial
\[\Phi^{\mathrm{Im}}_Q(L)=\sum_{f\in\mathrm{Hom}(Q(L),Q)} u^{|\mathrm{Im}(f)|}.\]
For instance, the trefoil knot $3_1$ has three monochromatic colorings by the 
quandle $Q=\Lambda_3/(t-2)$ and six surjective colorings, so we have
quandle counting invariant $|\mathrm{Hom}(Q(K3_1),Q)|=9$ and image enhancement
$\Phi^{\mathrm{Im}}_Q(3_1)=3u+6u^3$. See \cite{N} for more.
}\end{example}

\section{Lie Ideal Enhancements}\label{L}

Let $R$ be a commutative ring, $A$ be an associative unital $R$-algebra 
and let $\mathfrak{a}$ be the associated Lie algebra (that is, $\mathfrak{a}$ 
is $A$ endowed with the Lie bracket $[u,v]=uv-vu$). Then for any integer 
$n\in \mathbb{Z}$, the group of units of 
$A$, $A^*=\{u\in A\ |\ \exists v\in A \ s.t. \ uv=vu=1 \}$, is a quandle under 
the $n$-fold conjugation operation
\[u\tr v=v^{-n}uv^n=u+v^{-n}[u,v^n]\]
which we call the \textit{$n$th quandle of units} of $\mathfrak{a}$, denoted
$Q_n(\mathfrak{a})$. If $\mathfrak{a}$ is finite, then so is 
$Q_n(\mathfrak{a})$, though the converse is not necessarily true. If $n=1$,
we will write $Q(\mathfrak{a})$ instead of $Q_1(\mathfrak{a})$. 

\begin{example}
Let $\mathbb{F}$ be a field. Then for any positive integer $m$, the matrix 
algebra $M_m(\mathbb{F})$ of square $m\times m$ matrices with entries in $F$
has quandle of units $GL_m(\mathbb{F})$ consisting of invertible matrices.
In particular, if $m=1$ then the quandle of units is the conjugation
quandle of the abelian group $\mathbb{F}\setminus 0$ and hence is
a trivial quandle.
\end{example}

\begin{example}
Let $G$ be a group and $\mathbb{F}$ a field. Then the group 
$\mathbb{F}$-algebra
\[\mathfrak{a}=\mathbb{F}[G]=\left\{\sum_{g\in G}\alpha_g g\ |\ \alpha_g\in \mathbb{F},\ g\in G\right\}\]
has quandle of units $Q(\mathfrak{a})$ containing a subquandle isomorphic to 
$\mathrm{Conj}(G)$. 
\end{example}

In some cases this quandle of units coincides with symplectic quandles as 
defined in \cite{NN}. More precisely, we have the following:

\begin{theorem}\label{thm2}
Let $A$ be an associative unital $\mathbb{F}$-algebra with 
symplectic form $\langle\cdot,\cdot\rangle:A\times A\to \mathbb{F}$. If
the symplectic quandle structure on $A^*$ agrees with the quandle of units
$Q(\mathfrak{a})$, then $[\vec{v}^{-1},\vec{u}] = [\vec{v},\vec{u}^{-1}]$ for all
$\vec{u},\vec{v}\in A^{\ast}$.
\end{theorem}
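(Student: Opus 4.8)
The strategy is to expand both binary operations on $A^*$ and compare them directly. Taking $n=1$ in the definition of the quandle of units, conjugation reads
\[
u\tr v = v^{-1}uv = u + v^{-1}[u,v],
\]
whereas the symplectic quandle operation restricted to $A^*$ reads $u\tr v = u + \langle u,v\rangle v$. Hence, viewing both as maps $A^*\times A^*\to A$, the two structures agree exactly when $\langle u,v\rangle\, v = v^{-1}[u,v]$ for all $u,v\in A^*$; when this holds the common operation is conjugation, so its values lie in $A^*$ and the induced $\tr^{-1}$ operations agree as well, being determined by $\tr$. Since $v$ is invertible we may right-multiply by $v^{-1}$, and the computational heart of the proof is the identity
\[
v^{-1}[u,v]v^{-1} = v^{-1}uvv^{-1} - v^{-1}vuv^{-1} = v^{-1}u - uv^{-1} = [v^{-1},u],
\]
valid for any unit $v$ and any $u\in A$. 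Thus the two quandle structures on $A^*$ agree if and only if $[v^{-1},u] = \langle u,v\rangle\, 1_A$ for all $u,v\in A^*$; call this $(\star)$.

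For the forward implication, assume the structures agree, so $(\star)$ holds. Applying $(\star)$ to the pair $(u,v)$ and to the pair $(v,u)$ gives $[v^{-1},u] = \langle u,v\rangle\, 1_A$ and $[u^{-1},v] = \langle v,u\rangle\, 1_A$. A symplectic form is alternating, so $\langle u,v\rangle = -\langle v,u\rangle$, and therefore
\[
[v^{-1},u] = \langle u,v\rangle\, 1_A = -\langle v,u\rangle\, 1_A = -[u^{-1},v] = [v,u^{-1}],
\]
which is the asserted identity.

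For the converse I would reverse this chain. The key identity also yields $[v,u^{-1}] = -[u^{-1},v] = -u^{-1}[v,u]u^{-1} = u^{-1}[u,v]u^{-1}$, so the hypothesis $[v^{-1},u]=[v,u^{-1}]$ is equivalent to $v^{-1}[u,v]v^{-1} = u^{-1}[u,v]u^{-1}$; combined with biadditivity in $u$ and the alternating property of $\langle\cdot,\cdot\rangle$, one then hopes to conclude that this common value equals $\langle u,v\rangle\, 1_A$, recovering $(\star)$ and hence the agreement of the two operations. I expect this last step to be the main obstacle: the forward direction is essentially a one-line manipulation once one has $v^{-1}[u,v]v^{-1} = [v^{-1},u]$, but the converse requires showing that the symmetry $[v^{-1},u]=[v,u^{-1}]$ actually pins the conjugation operation down against the given symplectic form, which means carefully matching the multiplicative structure of $A$ against the additive, bilinear data carried by $\langle\cdot,\cdot\rangle$.
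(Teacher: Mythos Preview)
Your forward direction is essentially identical to the paper's argument: both reduce agreement of the two operations to the condition $(\star)$ via the identity $v^{-1}[u,v]v^{-1}=[v^{-1},u]$, and then invoke antisymmetry of $\langle\cdot,\cdot\rangle$ together with the same identity applied to the swapped pair to obtain $[v^{-1},u]=[v,u^{-1}]$.

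Regarding the converse: the paper's own proof stops after the forward direction and does not treat the ``if'' part at all. Your instinct that this is the real obstacle is well-founded. As you observe, the hypothesis $[v^{-1},u]=[v,u^{-1}]$ is a condition purely on the associative structure of $A$ and makes no reference to the given symplectic form $\langle\cdot,\cdot\rangle$, so there is no evident mechanism by which it alone could force $[v^{-1},u]=\langle u,v\rangle\,1_A$ for that particular form. In other words, the gap you flag in your final paragraph is genuine, and it is a gap the paper shares rather than one it resolves.
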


\begin{proof}
For the symplectic quandle structure on $A^*$ we have 
$ \vec{u} \tr \vec{v}  = \vec{u} + \langle \vec{u},\vec{v}\rangle \vec{v},$ 
where $\langle\cdot,\cdot \rangle$ is antisymmetric. If 
$\vec{u} \triangleright \vec{v}$ is the same as the quandle action 
defining $Q(\mathfrak{a})$ above, then we have
\[\langle \vec{u},\vec{v}\rangle\vec{v}=\vec{v}^{-1}[\vec{u},\vec{v}]\]
and thus
\[
\langle \vec{u},\vec{v}\rangle\vec{1}
=\langle \vec{u},\vec{v}\rangle\vec{v}\vec{v}^{-1}
=\vec{v}^{-1}[\vec{u},\vec{v}]\vec{v}^{-1}
=\vec{v}^{-1}(\vec{u}\vec{v}-\vec{v}\vec{u})\vec{v}^{-1}
=\vec{v}^{-1}{u}-\vec{u}\vec{v}^{-1}
=[\vec{v}^{-1},u].
\]
Now, we also have
\[
\langle \vec{u},\vec{v}\rangle\vec{1}
=-\langle \vec{v},\vec{u}\rangle\vec{1}
=-[\vec{u}^{-1},\vec{v}] = [\vec{v},\vec{u}^{-1}]
\]
and thus $[\vec{v}^{-1},\vec{u}] = [\vec{v},\vec{u}^{-1}]$.


\end{proof}

\begin{proposition}
The quandle of units in a Lie algebra $A$ over a field $\mathbb{F}$
is involutory if and only if either (1) $\mathbb{F}$ has characteristic 2 or
(2) the group of units $A^{\ast}$ is abelian.
\end{proposition}

\begin{proof}
Recall that a quandle $Q$ is involutory if we 
have $\vec{u}\tr\vec{v}=\vec{u}\tr^{-1}\vec{v}$ for all $\vec{u},\vec{v}\in Q$.
In the quandle of units case, we have
\[\vec{u}\tr \vec{v}= \vec{u}+\vec{v}^{-1}[\vec{u},\vec{v}]\]
and
\[\vec{u}\tr^{-1} \vec{v}= \vec{u}-\vec{v}^{-1}[\vec{u},\vec{v}].\]
Then $Q(\mathfrak{a})$ is involutory iff
\[[\vec{u},\vec{v}]=-[\vec{u},\vec{v}]=[\vec{v},\vec{u}],\]
that is, iff 
\[\vec{u}\vec{v}-\vec{v}\vec{u}=\vec{v}\vec{u}-\vec{u}\vec{v}\]
i.e., iff 
\[2\vec{u}\vec{v}=2\vec{v}\vec{u}\]
for all $\vec{u},\vec{v}\in A^{\ast}$. If $\mathbb{F}$ has characteristic $2$,
this is automatic; otherwise, $Q(\mathfrak{a})$ is involutory 
iff $\vec{u}\vec{v}=\vec{v}\vec{u}$ for all $\vec{u},\vec{v}\in A^{\ast}$.
\end{proof}

Now, if $L$ is a knot or link and $Q\subset Q(\mathfrak{a})$ is a finite
subquandle of the quandle of units of a Lie algebra $\mathfrak{a}$, 
then the set of quandle homomorphisms $\mathrm{Hom}(Q(L),Q)$ is a finite set 
which is unchanged by Reidemeister moves. In particular, we can use the 
structure of $\mathfrak{a}$ to obtain invariant signatures of the quandle 
labelings of $L$ by $Q$ to enhance the quandle counting invariant.

\begin{definition}\textup{
Let $L$ be a link, $\mathfrak{a}$ a finite dimensional 
Lie algebra and $Q\subset Q(\mathfrak{a})$ a finite subquandle of the
quandle of units $Q(\mathfrak{a})$. Then the multiset 
\[\Phi_{Q,\mathfrak{a}}^{M}(L)=\{I(\mathrm{Im}(f))\ |\ f\in\mathrm{Hom}(Q(K),
Q)\}\]
of Lie ideals $I(\mathrm{Im}(f))$ generated by the image subquandles 
$\mathrm{Im}(f)$ for $f\in\mathrm{Hom}(Q(K),Q)$
is the \textit{Lie Ideal Enhancement} of the quandle counting invariant.
The polynomial
\[\Phi_{Q,\mathfrak{a}}(L)=
\left\{\begin{array}{ll} \displaystyle
\sum_{f\in\mathrm{Hom}(Q(L),Q)}u^{|I(\mathrm{Im}(f))|} & \mathfrak{a} \ \mathrm{finite} \\
\displaystyle \sum_{f\in\mathrm{Hom}(Q(L),Q)}
u^{\mathrm{rank}(I(\mathrm{Im}(f)))} & \mathfrak{a}\ \mathrm{infinite} \\ \end{array}\right.\]
is the \textit{Lie Ideal Enhancement polynomial} of $L$ with respect to the
the subquandle $Q\subset Q(\mathfrak{a})$ of the quandle of units of the
Lie algebra $\mathfrak{a}$. If $Q=Q(\mathfrak{a})$, we will denote 
$\Phi_{Q,\mathfrak{a}}$ simply as $\Phi_{\mathfrak{a}}$.
}\end{definition}

By construction, we have the following theorem:

\begin{theorem}
For any Lie algebra $\mathfrak{a}$, the multiset $\Phi_{Q,\mathfrak{a}}^{I}(L)$
and polynomial $\Phi_{Q,\mathfrak{a}}(L)$ are link invariants.
\end{theorem}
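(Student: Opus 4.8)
\quad The plan is to reduce the statement to the invariance of the quandle counting invariant, which is already available. For $f\in\mathrm{Hom}(Q(L),Q)$ put $\phi(f)=|I(\mathrm{Im}(f))|$ when $\mathfrak{a}$ is finite and $\phi(f)=\mathrm{rank}(I(\mathrm{Im}(f)))$ when $\mathfrak{a}$ is infinite. First I would check that $\phi(f)$ is a well-defined non-negative integer: the image $\mathrm{Im}(f)$ is a subquandle of $Q$ (images of quandle homomorphisms are closed under $\tr$ and $\tr^{-1}$), hence a subset of $Q(\mathfrak{a})$ and so of $\mathfrak{a}$, and since $\mathfrak{a}$ is finite dimensional the Lie ideal $I(\mathrm{Im}(f))$ it generates is a finite-dimensional subspace of $\mathfrak{a}$ --- of finite cardinality when $\mathfrak{a}$ is finite and of well-defined rank in general. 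Since the multiset $\Phi_{Q,\mathfrak{a}}^{M}(L)$ equals $\{\phi(f)\ :\ f\in\mathrm{Hom}(Q(L),Q)\}$ and the polynomial $\Phi_{Q,\mathfrak{a}}(L)$ equals $\sum_f u^{\phi(f)}$, it suffices to show that this multiset is unchanged by Reidemeister moves.

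Next I would invoke the fact that the fundamental quandle is itself a link invariant. A Reidemeister move carrying a diagram $D$ of $L$ to a diagram $D'$ induces an isomorphism $g:Q(L)_D\xrightarrow{\sim}Q(L)_{D'}$ of the quandles these diagrams present; composition with $g$ is the bijection $\mathrm{Hom}(Q(L)_{D'},Q)\to\mathrm{Hom}(Q(L)_D,Q)$, $f'\mapsto f'\circ g$, underlying invariance of the counting invariant. For a matched pair $f'\leftrightarrow f:=f'\circ g$, surjectivity of $g$ gives $\mathrm{Im}(f)=f'(g(Q(L)_D))=f'(Q(L)_{D'})=\mathrm{Im}(f')$, so the two colorings have exactly the same image subquandle as a subset of $\mathfrak{a}$. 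Hence they generate the same Lie ideal of $\mathfrak{a}$ and $\phi(f)=\phi(f')$. Consequently the counting bijection restricts, for each integer $k$, to a bijection between the colorings of $D$ with $\phi=k$ and those of $D'$ with $\phi=k$, so $\Phi_{Q,\mathfrak{a}}^{M}(L)$ --- and therefore $\Phi_{Q,\mathfrak{a}}(L)$ --- is unchanged.

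If one prefers to stay at the level of diagrams, the same conclusion follows by inspecting the three Reidemeister moves directly: the coloring correspondence keeps the colors on every arc that survives the move and assigns to every newly created arc a color obtained from neighboring colors by a single application of $\tr$ or $\tr^{-1}$ (and symmetrically when a move is read backwards). Since $Q(L)_D$ is generated as a quandle by its arc-generators, $\mathrm{Im}(f)$ is the subquandle of $Q$ generated by the arc colors of $D$, and inserting or deleting an arc whose color is a quandle word in the remaining colors does not change that generated subquandle, hence does not change $I(\mathrm{Im}(f))$ or $\phi(f)$.

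There is no genuine obstacle here --- as noted in the excerpt, the theorem holds essentially by construction --- and the only care needed is the bookkeeping that the coloring correspondence of each Reidemeister move really does preserve $\mathrm{Im}(f)$ (cleanly handled by the functorial argument of the second paragraph, or move-by-move from the pictures) together with the observation, made above, that $|I(\mathrm{Im}(f))|$ resp. $\mathrm{rank}(I(\mathrm{Im}(f)))$ is well defined because $\mathfrak{a}$ is finite dimensional. Finally, since $\Phi_{Q,\mathfrak{a}}(L)$ is recovered from the multiset $\Phi_{Q,\mathfrak{a}}^{M}(L)$ as its generating function, invariance of the polynomial follows from invariance of the multiset, completing the argument.
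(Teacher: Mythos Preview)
Your proposal is correct and essentially matches the paper's own treatment: the paper simply asserts the theorem ``by construction'' without giving an explicit proof, and your argument spells out precisely the implicit reasoning---that the Reidemeister-move bijection on $\mathrm{Hom}(Q(L),Q)$ preserves $\mathrm{Im}(f)$ and hence $I(\mathrm{Im}(f))$. There is nothing to add.
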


We can also define enhancements using the associative ideals 
$AI(\mathrm{Im}(f))$ in the enveloping algebra $A$
\[\Phi_{Q,A}^{M}(L)=\{AI(\mathrm{Im}(f))\ |\ f\in\mathrm{Hom}(Q(K),Q)\}\]
and 
\[\Phi_{Q,A}(L)=\sum_{f\in\mathrm{Hom}(Q(L),Q)}u^{|AI(\mathrm{Im}(f))|}\]
and combine these to get a two-variable polynomial
\[\Phi_{Q,A,\mathfrak{a}}(L)=\sum_{f\in\mathrm{Hom}(Q(L),Q)}u^{|I(\mathrm{Im}(f))|}v^{|AI(\mathrm{Im}(f))|}.
\]

\begin{remark}
These invariants are defined for virtual knots as well as classical knots
via the usual method of ignoring virtual crossings; see \cite{K} for more
about virtual knot invariants defined from quandles.
\end{remark}

\section{Computations and Examples}\label{CE}

In this section we collect a few examples of LIE invariants and their 
computation. Our computations use custom \texttt{Python} code available for
download from the second author's website at \texttt{www.esotericka.org}.

\begin{example}\textup{If $A=\mathbb{Z}[G]$ where $G$ is an abelian group,
then $Q(\mathfrak{a})$ is a trivial quandle, i.e. $x\tr y=x$ for all $x,y\in
Q(\mathfrak{a})$. If $L$ is a link of $c$ components, then each component
is monochromatic in any quandle coloring, and every coloring assigning the same
fixed colors to the arcs comprising a component is valid. Hence, there are
$|Q(\mathfrak{a})|^c$ colorings of $L$ by such a quandle. Since the Lie bracket
is zero for $G$ abelian, the Lie ideals are just the subspaces generated by the
quandle elements. Such a coloring using $k$ distinct colors will then generate 
a Lie ideal of dimension $k$. Let $n=|Q(\mathfrak{a})|$; then for each 
$k\in\{1,2,\dots,c\}$ there are $(n)_kS(c,k)$ such colorings where 
$(n)_k=n(n-1)\dots(n-k+1)$ is the $k$th falling factorial of $n$ and $S(c,k)$
is the Stirling number of the second kind, i.e. the number of ways of 
partitioning a set of cardinality $c$ into $k$ nonempty subsets. For instance,
if $\mathfrak{a}=\mathbb{Z}_3[C_4]$ where $C_4=\{1,x,x^2,x^3 \ | x^4=1\}$ then
$Q(\mathfrak{a})$ is the trivial quandle of $n=4$ elements; if $L$ is a link
of $c=3$ components, then $S(3,1)=1$, $S(3,2)=3$ and $S(3,3)=1$, and we have
\[\Phi_{\mathfrak{a}}(L)=(4)_1S(3,1)u^3+(4)_2S(3,2)u^9+(4)_3S(3,3)u^{27}=
4u^3+36u^9+24u^{27}.\]
}\end{example}

In particular, the previous example implies the following:
\begin{theorem}
Let $L$ be a link of $c$ components and $\mathfrak{a}$ an abelian Lie algebra 
over a field $\mathbb{F}$ with trivial quandle of units $Q(\mathfrak{a})$ of 
cardinality $n$. Then the LIE polynomial is given by
\[\Phi_{\mathfrak{a}}(L)=\sum_{k=1}^{c} (|Q(\mathfrak{a})|)_kS(c,k)u^{p^k}\]
if $|\mathbb{F}|=p$ or
\[\Phi_{\mathfrak{a}}(L)=\sum_{k=1}^{c} (|Q(\mathfrak{a})|)_kS(c,k)u^k\]
if $|\mathbb{F}|$ is not finite.
\end{theorem}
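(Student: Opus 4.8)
The plan is to deduce the theorem directly from the preceding worked example, since the example already establishes all the combinatorial ingredients; the theorem simply replaces the particular group ring $\mathbb{Z}_3[C_4]$ by an arbitrary abelian Lie algebra $\mathfrak{a}$ over a field $\mathbb{F}$ with trivial quandle of units and an arbitrary number of components $c$. First I would record the structural fact that, because $Q(\mathfrak{a})$ is a trivial quandle, every quandle coloring of a diagram of $L$ is constant on each component, so a coloring is the same data as a function from the set of $c$ components to $Q(\mathfrak{a})$; conversely any such function gives a valid coloring since the crossing relation $x \tr y = x$ holds automatically. Hence $\mathrm{Hom}(Q(L),Q(\mathfrak{a}))$ is in bijection with the set of all functions from a $c$-element set to $Q(\mathfrak{a})$.

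Next I would analyze the invariant $I(\mathrm{Im}(f))$ attached to each coloring $f$. Since $\mathfrak{a}$ is abelian, the Lie bracket vanishes identically, so the Lie ideal generated by any subset of $\mathfrak{a}$ is just the $\mathbb{F}$-linear span of that subset. If a coloring uses exactly $k$ distinct colors $v_1,\dots,v_k \in Q(\mathfrak{a})$, then $I(\mathrm{Im}(f)) = \mathrm{span}_{\mathbb{F}}\{v_1,\dots,v_k\}$. Here one must be slightly careful: a priori the span of $k$ chosen elements could have dimension less than $k$ if those elements are linearly dependent. I would address this by noting the context forces $\mathfrak{a}$ to be exactly the span of $Q(\mathfrak{a})$ with $Q(\mathfrak{a})$ a basis — indeed in all the examples the quandle of units is a linearly independent set (for a group ring $\mathbb{F}[G]$ the group elements form a basis) — so any $k$ distinct colors are linearly independent and span a subspace of dimension exactly $k$. (If one does not wish to assume this, the cleanest fix is to take it as a hypothesis that $Q(\mathfrak{a})$ is linearly independent in $\mathfrak{a}$; I would state this explicitly.) Then $|I(\mathrm{Im}(f))| = |\mathbb{F}|^{k} = p^k$ when $|\mathbb{F}| = p$ is finite, and $\mathrm{rank}(I(\mathrm{Im}(f))) = k$ when $\mathbb{F}$ is infinite, matching the two cases in the definition of $\Phi_{\mathfrak{a}}$.

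Finally I would count, for each $k \in \{1,2,\dots,c\}$, how many colorings use exactly $k$ distinct colors. A coloring using exactly $k$ colors is determined by a partition of the $c$ components into $k$ nonempty blocks (the level sets of $f$), of which there are $S(c,k)$ by definition of the Stirling number of the second kind, together with an injective assignment of the $k$ blocks to $k$ of the $n = |Q(\mathfrak{a})|$ available colors, of which there are the falling factorial $(n)_k = n(n-1)\cdots(n-k+1)$. Hence there are exactly $(|Q(\mathfrak{a})|)_k\, S(c,k)$ colorings contributing the monomial $u^{p^k}$ (resp. $u^k$), and summing over $k$ from $1$ to $c$ gives
\[\Phi_{\mathfrak{a}}(L) = \sum_{k=1}^{c} (|Q(\mathfrak{a})|)_k\, S(c,k)\, u^{p^k}\]
in the finite-field case and the analogous formula with $u^k$ in the infinite case, as claimed. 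The only genuine subtlety — and the step I expect to be the main obstacle, or at least the one requiring an explicit hypothesis — is the linear independence of the colors used, i.e. ensuring that $k$ distinct quandle elements always span a $k$-dimensional subspace; everything else is the bijection between colorings and functions plus the standard Stirling/falling-factorial count of surjections.
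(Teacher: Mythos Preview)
Your proposal is correct and follows essentially the same argument the paper gives: the paper presents this theorem as an immediate consequence of the preceding worked example, and that example contains precisely the three steps you spell out (colorings of a link by a trivial quandle are functions from components to $Q(\mathfrak{a})$; for abelian $\mathfrak{a}$ the Lie ideal is the linear span; the count of functions using exactly $k$ values is $(n)_k S(c,k)$). You are also right to flag the linear-independence issue --- the paper simply asserts that $k$ distinct colors generate a $k$-dimensional ideal without comment, so your observation that this requires $Q(\mathfrak{a})$ to be linearly independent (as it is in the group-ring examples) is a genuine clarification rather than a deviation.
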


Thus, to get nontrivial invariant values, we must (unsurprisingly) focus on 
non-abelian Lie algebras.

\begin{example}
Let $\mathfrak{a}=M_2(\mathbb{Z}_2)$, the Lie algebra of $2\times 2$ matrices 
with entries in $\mathbb{Z}_2$. Then the quandle of units of $\mathfrak{a}$ is 
isomorphic to the conjugation quandle of $S_3=\langle\alpha,\beta \ |\ \alpha^2=\beta^3=1,\ (\alpha\beta)^2=1\rangle$ with
\[\alpha=\left[\begin{array}{rr}0 & 1\\ 1 & 0 \end{array}\right]
\quad\mathrm{and}\quad
\beta=\left[\begin{array}{rr}1 & 1 \\ 1 & 0\end{array}\right].\] 
The link $L7a4$ has 30 quandle
labellings by $Q(\mathfrak{a})$ including for example
\[\includegraphics{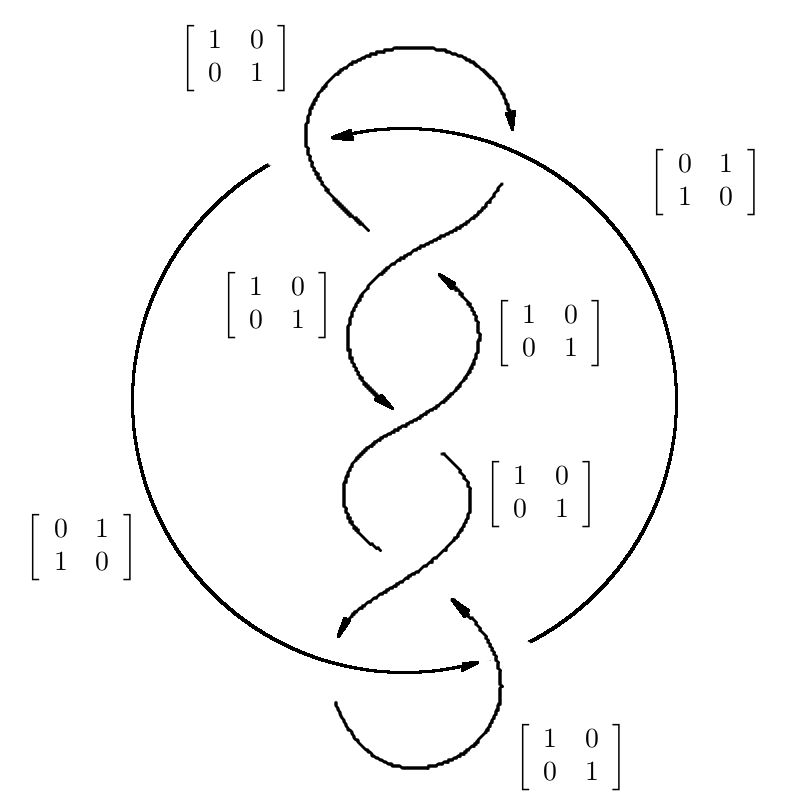}\]
The subquandle of $Q(\mathfrak{a})$ generated by 
$\left[\begin{array}{rr}
1 & 0 \\
0 & 1 \\
\end{array}\right]
$ and $
\left[\begin{array}{rr}
0 & 1 \\
1 & 0 \\
\end{array}\right]
$
is a trivial quandle of just those two elements; the Lie ideal they generate
consists of the four matrices
\[\left\{
\left[\begin{array}{rr}
0 & 0 \\
0 & 0 \\
\end{array}\right], \
\left[\begin{array}{rr}
1 & 0 \\
0 & 1 \\
\end{array}\right],\
\left[\begin{array}{rr}
0 & 1 \\
1 & 0 \\
\end{array}\right],\
\left[\begin{array}{rr}
1 & 1 \\
1 & 1 \\
\end{array}\right]
\right\}\]
so this quandle coloring contributes $u^4$ to $\Phi_{\mathfrak{a}}(L7a4)$.
\end{example}

\begin{remark}
In the previous example, the constant coloring  of the link by the matrix
$\left[\begin{array}{rr}
0 & 1 \\
1 & 0 \\
\end{array}\right]$
generates a four-element Lie ideal, but spans only a 2 element subspace of
$\mathfrak{a}$ considered as a symplectic vector space. Thus, this constant 
coloring contributes $u^4$ to the LIE invariant and $u^2$ to the symplectic
quandle enhancement, so we expect the symplectic enhancement and LIE invariant
to contain different information. Indeed, the follwing examples show that the
LIE invariant is not determined by the quandle counting invariant, the image 
enhancement invariant or the symplectic quandle enhancement (in case
the quandle of units is symplectic).
\end{remark}

\begin{example}\label{ex1}
The links $L7a4$ and $L7n1$ both have counting invariant value $30$ with
respect to the quandle of units $Q(\mathfrak{a})$ from the previous example
but have distinct Lie ideal enhanced polynomials of 
$\Phi_{\mathfrak{a}}(L7a4)=20u^{16}+9u^4+u^2$ and
$\Phi_{\mathfrak{a}}(L7n1)=14u^{16}+6u^8+9u^4+u^2$. In particular, this example
shows that $\Phi_{\mathfrak{a}}$ is a proper enhancement of the counting invariant.
\[\begin{array}{cc}
\includegraphics{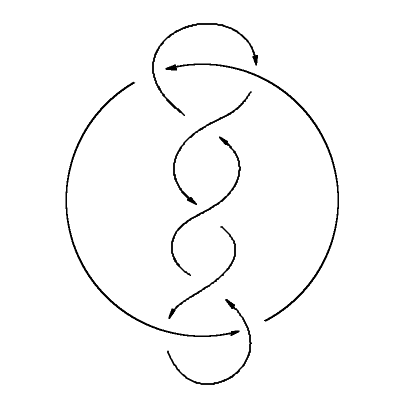} & \includegraphics{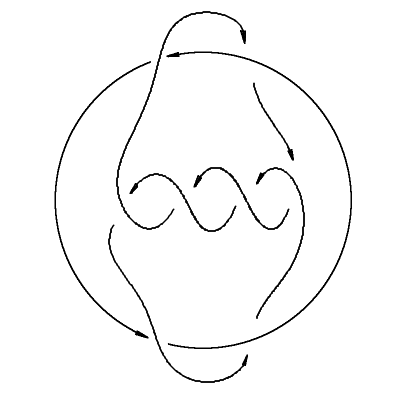} \\
\Phi_{\mathfrak{a}}(L7a4)=20u^{16}+9u^4+u^2 &
\Phi_{\mathfrak{a}}(L7n1)=14u^{16}+6u^8+9u^4+u^2 \\
\end{array}\]
\end{example}

\begin{example}
Example \ref{ex1} shows that the LIE invariant is not determined by the 
quandle counting invariant; however, the two links listed are also
distinguished by the image enhancement invariant, with 
$\Phi_{\mathfrak{a}}^{\mathrm{Im}}(L7a4)=12u^5+12u^2+6u$ and
$\Phi_{\mathfrak{a}}^{\mathrm{Im}}(L7n1)=6u^5+6u^4+12u^2+6u$.
The virtual links $L_1$ and $L_2$ below have the same quandle counting invariant
and image enhancement values for the two-fold conjugation quandle of $S^3$
but are distinguished by the LIE invariant for $\mathfrak{a}=\mathbb{Z}_2[S_3]$
and $Q=Q_2(\mathfrak{a})$:
\[\includegraphics{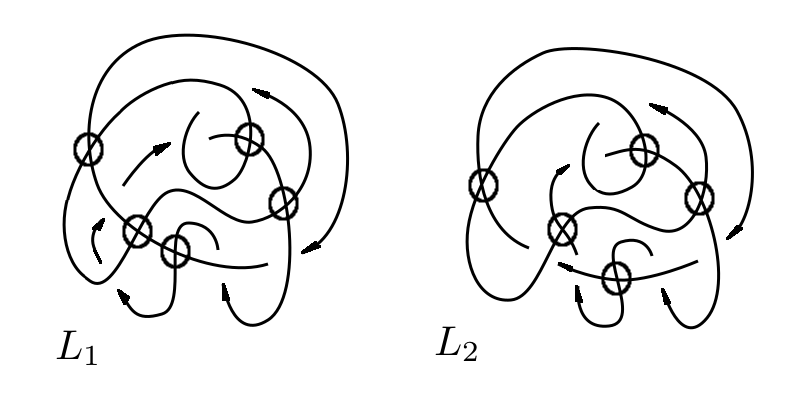}\]
\[\begin{array}{cc}
\Phi_{Q_2(\mathfrak{a})}^{\mathbb{Z}}(L_1)= 126&
\Phi_{Q_2(\mathfrak{a})}^{\mathbb{Z}}(L_2)= 126\\
\Phi_{Q_2(\mathfrak{a})}^{\mathrm{Im}}(L_1)= 24u^5 + 12u^4 + 30u^3 + 54u^2 + 6u &
\Phi_{Q_2(\mathfrak{a})}^{\mathrm{Im}}(L_2)= 24u^5 + 12u^4 + 30u^3 + 54u^2 + 6u\\
\Phi_{Q_2(\mathfrak{a}),\mathfrak{a}}(L_1)=12u^{64} + 60u^{32} + 50u^{16} + 3u^8 + u^2 &
\Phi_{Q_2(\mathfrak{a}),\mathfrak{a}}(L_2)= 18u^{64} + 54u^{32} + 50u^{16} + 3u^8 + u^2\\
\end{array}\]
Hence, the LIE invariant is not determined by the image enhancement or
the quandle counting invariant.
\end{example}

\begin{example}
Let $\mathfrak{a}=M_2(\mathbb{Z}_2)$. Then the virtual links below are not 
distinguished by the quandle counting invariant, image enhancement or 
symplectic quandle enhancement invariant, but are distinguished by the LIE 
invariant:
\[\includegraphics{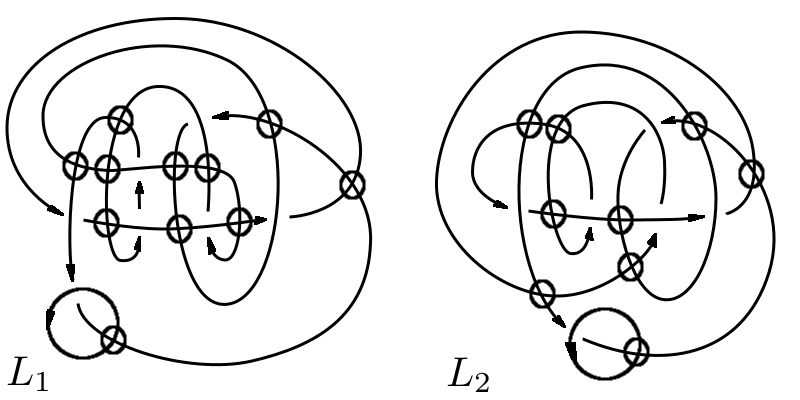}\]
\[\begin{array}{cc}
\Phi_{Q(\mathfrak{a})}^{\mathbb{Z}}(L_1)= 108 &
\Phi_{Q(\mathfrak{a})}^{\mathbb{Z}}(L_2)= 108\\
\Phi_{Q(\mathfrak{a})}^{\mathrm{Im}}(L_1)= 12u^6 + 30u^5 + 12u^4 + 12u^3 + 36u^2 + 6u &
\Phi_{Q(\mathfrak{a})}^{\mathrm{Im}}(L_2)= 12u^6 + 30u^5 + 12u^4 + 12u^3 + 36u^2 + 6u \\
\Phi_{Q(\mathfrak{a})}^{\mathrm{Symp}}(L_1)= 30u^{16} + 24u^8 + 48u^4 + 6u^2 &
\Phi_{Q(\mathfrak{a})}^{\mathrm{Symp}}(L_2)= 30u^{16} + 24u^8 + 48u^4 + 6u^2 \\
\Phi_{Q(\mathfrak{a}),\mathfrak{a}}(L_1)=74u^{16} + 12u^8 + 21u^4 + u^2 &
\Phi_{Q(\mathfrak{a}),\mathfrak{a}}(L_2)= 68u^{16} + 18u^8 + 21u^4 + u^2\\
\end{array}\]
\end{example}

\begin{example}\textup{
For our final example we chose a quandle embedded in a relatively small Lie
algebra for speed of computation and computed the LIE polynomial invariant for 
all prime classical links with up to seven crossings. The results are 
collected in the table.
\[M_Q=\left[\begin{array}{ccccc}
1 & 3 & 1 & 3 & 3 \\
4 & 2 & 5 & 5 & 4 \\
3 & 1 & 3 & 1 & 1 \\
5 & 5 & 2 & 4 & 2 \\
2 & 4 & 4 & 2 & 5
\end{array}\right] \quad \mathfrak{a}=\mathbb{Z}_2[S_3]\quad
\begin{array}{r|l}
\Phi_{Q,\mathfrak{a}}(L) & L \\ \hline
4u^{16}+3u^8 & L2a1, L6a2, L7a6 \\ 
8u^{16}+3u^8 & L6a4 \\ 
10u^{16}+3u^8 & L6a3, L7a5 \\
6u^{32}+4u^{16}+3u^8 & L7a2, L7a3, L7n1, L7n2 \\ 
12u^{32}+4u^{16}+3u^8 & L4a1, L5a1, L7a4\\ 
12u^{32}+10u^{16}+3u^8 & L6a1, L7a1 \\ 
18u^{32}+8u^{16}+3u^8 & L6n1, L7a7 \\
18u^{32}+14u^{16}+3u^8 & L6a5 \\ 
\end{array}
\]
}\end{example}


\section{Questions For Future Work}\label{Q}

We close with a few questions and direction for future research.

In \cite{FR}, it is noted that the exponential map satisfies the augmented 
rack identity, giving a Lie algebra the structure of an augmented rack with 
augmentation group the associated Lie group. What are sufficient
conditions for the resulting rack to be a finite quandle? In the case of
infinite Lie quandles, can Lie ideals be used to enhance the topological
version of the counting invariant defined in \cite{R} or to define power
series-valued counting invariants and enhancements? 
What are some other quandle structures on Lie algebras?

\bibliography{gg-sn-rev1}{}
\bibliographystyle{abbrv}

%
%
%
%
%
%
%
%

\bigskip

\noindent\textsc{Department of Mathemtical Sciences\\
Claremont McKenna College \\
850 Columbia Ave. \\
Claremont, CA 91767
}

\end{document}